\newtheorem{defn}{Definition}[section]
\newtheorem{prop}[defn]{Proposition}
\newtheorem{lem}[defn]{Lemma}
\newtheorem{thm}[defn]{Theorem}
\newcommand {\ZZ}{{\mathbb Z}}
\newcommand {\K}{{\mathcal K}}
\newcommand {\C}{{\mathbb C}}
\newcommand {\HH}{{\mathfrak  H}}
\newcommand {\I}{{\mathcal I}}
\def\deg{\operatorname{deg}}
\def\mod{\operatorname{mod}}
\title{Iterated integrals and higher order automorphic forms}
\author{Nikolaos Diamantis\\
Ramesh Sreekantan}
\date{}
\begin{document}

\maketitle

\bf Abstract. \rm Higher order automorphic forms have recently been
introduced to study important questions in number
theory and mathematical physics. We investigate the
connection between these
functions and Chen's iterated integrals. Then using
Chen's theory, we
prove a structure theorem for automorphic forms of all orders.
This allows us to define an analogue of a mixed Hodge structure
on a space of higher order automorphic forms.
\it

\bf Keywords. \rm Higher-order automorphic forms, Chen iterated integrals,
mixed Hodge structures \rm

\bf 2000 Mathematics Subject Classification. \rm 11F11, 14C30, 30F30,
32S35 \rm

\section{Introduction}

In \cite{[G]} the notion of Eisenstein series with modular symbols
was introduced in order to study a new approach towards a
conjecture of Szpiro. This series is not invariant under the
action of the relevant group, but instead it satisfies a $4$-term
functional equation. Motivated by the applications of this
Eisenstein series (\cite{[G], [G1], [PR], [R]}),  and the form of
its functional equation which generalizes that of the classical
automorphic forms, the first author and others began the general
study of classes of functions satisfying equations of this type
(\cite{[CDO]}). Similar objects were defined and studied from a
different viewpoint by Kleban and Zagier (\cite{[KZ]}).

In this paper we complete the classification of a the space of
automorphic
forms of all orders and weights for a Fuchsian group of the first kind
$\Gamma$ without elliptic elements along the
lines of the classification of the space of second-order modular forms
proved in \cite{[CDO]}. It should be noted that the space classified here
is larger than that studied in \cite{[CDO]}. This was motivated by the
desire to study certain automorphic forms that do not seem to belong to
the smaller space.

For the classification we use Chen's theory of iterated integrals.
Although it is possible that there exists an alternative proof of
the classification that does not use iterated integrals, we wanted
to highlight this connection with the important theory of iterated
integrals. In this
approach, higher order modular forms can be loosely viewed as
antiderivatives of iterated integrals on the modular curve.
(See \cite{[HB]}, \cite{[M]} for other applications of iterated integrals
to
modular forms which however do not deal with higher orders).

Based on this classification, we impose a Mixed-Hodge-type structure on
the space of automorphic forms of all orders in the case of weight $0$.
Because of the infinite dimensionality of the quotients of the ``weight
filtration", this structure is not a standard Mixed Hodge Structure.

However the structure described here reflects in a very natural
way the algebraic structure of our space and it seems likely that
certain subspaces of these automorphic forms could have a usual
Mixed Hodge Structure.

\bf Acknowledgment. \rm The authors thank the referee for a very careful
reading of the paper and for many useful suggestions.

\section{Higher order automorphic forms}

Let $\Gamma \subset$PSL$_2(\mathbb Z)$
be a Fuchsian group of the first kind with parabolic elements acting in
the standard manner on the upper half-plane $\HH.$ We use the set of
generators of $\Gamma$ given by Fricke and Klein.  Specifically, if
$\Gamma \backslash \HH$ has genus $g,$ $r$ elliptic fixed points and $m$
cusps, then there are $2g$ hyperbolic elements $\gamma_1, \dots,
\gamma_{2g},$ $m$ parabolic elements $\gamma_{2g+1}, \dots, \gamma_{2g+m}$
and $r$ elliptic elements $\gamma_{2g+m+1}, \dots, \gamma_{2g+m+r}$
generating $\Gamma.$ Furthermore, these generators satisfy the $r+1$
relations: $$ [\gamma_1,\gamma_{g+1}]\dots [\gamma_g,\gamma_{2g}]
\gamma_{2g+1} \dots \gamma_{2g+m} \gamma_{2g+m+1} \dots \gamma_{2g+m+r}=1,
\, \, \gamma_j^{e_j}=1$$ for $2g+m+1 \le j \le 2g+m+r$ and integers $e_j
\ge 2$. Here $[a, b]$ denotes the commutator $aba^{-1}b^{-1}$ of $a$ and
$b$.

We set $Y(\Gamma)$ for the modular curve $\Gamma \backslash \HH$
and we consider the natural projection map $\pi: \HH \to Y(\Gamma).$
For a function $f$ on $\HH$ and an even integer $k$, set
$$(f|_k{\gamma})(z):=(cz+d)^{-k} f(\gamma z)$$
for $\gamma=\begin{pmatrix} a & b\\c & d \end{pmatrix}$ in
$\Gamma$. This defines an action of $\Gamma$ on the space of
complex functions on $\mathfrak H.$ We extend this action to $\mathbb
C[\Gamma]$
by linearity.

A classical automorphic form of weight $k$ for $\Gamma$ is a smooth
function $f$ of ``at most polynomial growth at the cusps" such that
$$f|_k{(\gamma-1)}=0$$
Let $J$ denote the augmentation ideal of the group ring which lies in the
exact sequence
$$0 \rightarrow J \rightarrow \ZZ[\Gamma]
\stackrel{\deg}{\rightarrow} \ZZ \rightarrow 0.$$
$J$ is generated by elements of the form $(\gamma-1)$ with
$\gamma$ in $\Gamma$, so we can define a classical \it automorphic form
\rm as a function of ``at most polynomial growth at the cusps" which is
annihilated by $J$ via $|_k$. We call holomorphic automorphic forms, \it
modular. \rm

With that in mind, we define an \it automorphic form $f$ of order
$s$ \rm to be a smooth function on $\mathfrak H$ such that
$$f|_k \delta=0 \text{ for all $\delta \in J^{s}$}.$$
Let $M_k^s(\Gamma)$ denote the automorphic forms of weight $k$ and
type $s$. From the definition, for a fixed $k$,  we have
$$M^0_k(\Gamma) \subseteq M^1_k(\Gamma) \subseteq
M^2_k(\Gamma)\dots  \subseteq M^s_k(\Gamma).$$
Classical automorphic forms are elements in $M_k^1(\Gamma)$ that
satisfy certain growth conditions at the cusps. Of course, there are
several variations of the definition of higher order modular
forms. See \cite{[CDO]}, \cite{[DKMO]} for a related discussion.

The first step towards the classification of automorphic forms of order
$s$ is
\begin{prop}
Let $$\psi: M_k^{s+1} \to \bigoplus ^{(2g+m-1)^s}_{i=1} M_k^1$$ be defined
by
$$\psi(f)=(f|_k(\gamma_{i_1}-1) \dots (\gamma_{i_s}-1))_{1 \le i_1,
\dots, i_s \le 2g+m-1}$$
Then,
$$0 \to M_k^s \hookrightarrow M_k^{s+1}
\xrightarrow{\psi}
\bigoplus_{i=1}^{(2g+m-1)^s} M_k^1 $$
is an exact sequence.
\label{exact}
\end{prop}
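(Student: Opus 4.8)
The plan is to establish exactness at the two nontrivial spots: at $M_k^s$ (i.e. that the inclusion $M_k^s \hookrightarrow M_k^{s+1}$ is injective with image exactly $\ker\psi$) and to note that $\ker\psi$ is what we claim. Injectivity of the inclusion is immediate since it is just the identity map on functions. The content is the identification $\ker\psi = M_k^s$.

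\medskip

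First I would observe the easy inclusion $M_k^s \subseteq \ker\psi$: if $f \in M_k^s$, then $f|_k\delta = 0$ for every $\delta \in J^s$, and each product $(\gamma_{i_1}-1)\cdots(\gamma_{i_s}-1)$ lies in $J^s$, so $\psi(f) = 0$. For the reverse inclusion, suppose $f \in M_k^{s+1}$ with $\psi(f) = 0$; I must show $f|_k\delta = 0$ for all $\delta \in J^s$. The key algebraic input is that $J^s$ is generated as a $\ZZ$-module (equivalently, as a left ideal, but here I only need the module structure after noting $f \in M_k^{s+1}$) by products of the form $(\gamma_{j_1}-1)\cdots(\gamma_{j_s}-1)$ where each $\gamma_{j_t}$ ranges over a generating set of $\Gamma$. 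So the real work is: (a) reduce from the full generating set $\gamma_1,\dots,\gamma_{2g+m+r}$ to the $2g+m-1$ elements appearing in $\psi$; and (b) show that knowing $f|_k(\gamma_{i_1}-1)\cdots(\gamma_{i_s}-1) = 0$ for those restricted indices forces vanishing on all of $J^s$.

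\medskip

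For step (a) I would use the relations among the Fricke--Klein generators. The single long relation $[\gamma_1,\gamma_{g+1}]\cdots[\gamma_g,\gamma_{2g}]\gamma_{2g+1}\cdots\gamma_{2g+m+r} = 1$ lets one express, say, $\gamma_{2g+m}$ (or the last parabolic generator) in terms of the others, which is why the index runs only to $2g+m-1$; the elliptic generators $\gamma_j$ with $j > 2g+m$ have finite order $e_j$, and since $f|_k\gamma_j = f$ on any form annihilated by $J$ (each elliptic element, having finite order, acts trivially in the relevant quotient — more precisely $\gamma_j - 1$ times a suitable idempotent-like combination kills it), the factors $(\gamma_j - 1)$ can be discarded. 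Here one uses crucially that $f \in M_k^{s+1}$, so $f|_k\delta$ for $\delta \in J^s$ already lies in $M_k^1$, i.e. is a genuine automorphic form on which the elliptic elements act trivially; combined with $(\gamma_j^{e_j} - 1) = (\gamma_j - 1)(\gamma_j^{e_j-1} + \cdots + 1)$ this shows the $(\gamma_j-1)$-factors contribute nothing new. One then checks, by a cocycle/telescoping identity of the form $(\alpha\beta - 1) = (\alpha - 1)\beta + (\beta - 1)$ applied repeatedly, that any product of $s$ factors $(\gamma - 1)$ from the full generating set, acting via $|_k$ on $f$, is a $\ZZ[\Gamma]$-combination of products of $s$ factors drawn from $\{\gamma_1 - 1, \dots, \gamma_{2g+m-1} - 1\}$ acting on $f$, modulo terms that vanish because $f \in M_k^{s+1}$ (lower-order products get multiplied back up). This reduces everything to the stated generators.

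\medskip

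For step (b), once the generation statement is in hand it is essentially formal: $\psi(f) = 0$ says precisely that all the generating products of $J^s$ annihilate $f$ via $|_k$, and since $f \in M_k^{s+1}$ guarantees that $f|_k J \subseteq M_k^s$, $f|_k J^2 \subseteq M_k^{s-1}$, etc., the identity $(\alpha\beta - 1) = (\alpha-1)\beta + (\beta - 1)$ lets one expand an arbitrary element of $J^s$ and conclude $f|_k J^s = 0$, i.e. $f \in M_k^s$. I expect the main obstacle to be step (a) — carefully bookkeeping how the long relation and the finite-order elliptic relations let one shrink the generating set from $2g+m+r$ to $2g+m-1$ while controlling the error terms through the filtration $M_k^0 \subseteq M_k^1 \subseteq \cdots$; the telescoping identity is the right tool but its repeated application across $s$ factors requires care. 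Everything else is bookkeeping with the augmentation filtration.
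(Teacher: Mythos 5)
Your proposal is correct and follows essentially the same route as the paper: reduce, modulo terms in $J^{s+1}$ which annihilate $f$, to products of generator differences via the telescoping identity $\gamma_1\gamma_2-1=(\gamma_1-1)(\gamma_2-1)+(\gamma_1-1)+(\gamma_2-1)$, eliminate elliptic factors by exploiting $\gamma_j^{e_j}=1$ (multiply by $e_j$ and divide, since the cross terms land in $J^{s+1}$), and remove the redundant parabolic generator using the long Fricke--Klein relation. The only imprecision is your opening claim that $J^s$ is $\ZZ$-spanned by generator-difference products --- that holds only modulo $J^{s+1}$, which is precisely the reduction your steps (a) and (b), like the paper's proof, actually carry out.
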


\begin{proof} To prove that $\ker(\psi) \subset M_k^s$, we first observe
that, if $f \in M_k^{s+1}$, then for each $\delta_1 \in J^m,$
$\delta_2 \in J^{s-m-1},$ we have
\begin{eqnarray}
f|_k\delta_1 (\gamma_1 \gamma_2-1)
\delta_2=f|_k\delta_1(\gamma_1-1)\delta_2+
f|_k\delta_1(\gamma_2-1)\delta_2.
\label{kerphi}
\end{eqnarray}
This follows from the observation that
\begin{equation}
\gamma_1 \gamma_2-
1=(\gamma_1-1)(\gamma_2-1)+(\gamma_1-1)+(\gamma_2-1).
\label{prod1}
\end{equation}
Now let $f$ be in $\ker(\psi)$. Using \eqref{kerphi} we observe
that to prove $f$ is in $M^s_{k}(\Gamma)$, it suffices to verify
that
$$f|_k(g_1-1) \dots (g_s-1)=0$$
for each $s$-tuple of {\em generators} $g_i$ of $\Gamma.$

From the definition of  $\psi$, $f|_k(g_1-1) \dots (g_{s}-1)=0$
for all the non-elliptic generators $g_i \in \{\gamma_1, \dots,
\gamma_{2g+m-1}\}$ of $\Gamma$.

Further, let $\gamma \in \Gamma$ be one of the elliptic generators
with $\gamma^{e}=1.$ As $\delta_1(\gamma^{l}-1)$ is in $J^{m+1}$, for
every $\delta_1 \in J^m,$ we have
$$f|_k\delta_1 \gamma^l(\gamma-1)\delta_2=f|_k\delta_1 (\gamma-1)\delta_2$$
for $\delta_1 \in J^m,$ $\delta_2 \in J^{s-m-1}$, $l=0, \dots
e-1$. Therefore,
$$ef|_k\delta_1 (\gamma-1)\delta_2=f|_k\delta_1(\sum_{l=0}^{e-1}\gamma^l)
(\gamma-1)\delta_2=f|_k\delta_1(\gamma^e-1)\delta_2=0$$
so $$f|_k\delta_1 (\gamma-1)\delta_2=0 \text{ for all } \delta_1
\in J^m,\delta_2 \in J^{s-m-1}.$$

Finally, using the relation between the generators, we can write
$$\gamma_{2g+m}=([\gamma_1,\gamma_{g+1}]\dots
[\gamma_g,\gamma_{2g}]\gamma_{2g+1} \dots \gamma_{2g+m-1})^{-1}
(\gamma_{2g+m+1} \dots \gamma_{2g+m+r})^{-1}$$
and, by (2), $\gamma_{2g+m+1}-1$ can be expressed in terms of the other
generators.
This implies that $f|_k(g_1-1) \dots
(g_{s}-1)=0$ for all $g_i$'s in the set of generators of $\Gamma$
and the middle term of the sequence is exact.
\end{proof}

The surjectivity of $\psi$ will be studied in Section 4. To this end, we
will need to define Chen's iterated integrals and to review their basic
properties.

\section{Iterated integrals}

Let $X$ be a smooth manifold.  Let $P(X)$ denote the space of
paths on $X$, namely piecewise smooth
$$\gamma:[0,1] \rightarrow X.$$
A function $\phi: P(X) \to \mathbb C$ is said to be a homotopy
functional if $\phi$ depends only on the homotopy class of
$\gamma$ relative to its endpoints,
that is, it defines a function on
$\Gamma=\pi_1(X, x_0)$, where
$x_0$ is a fixed point of $X$. Equivalently, it induces
an element of Hom$(\ZZ[\Gamma], \mathbb C).$

Let $w$ be a smooth 1-form on $X$. The map
$$\gamma \rightarrow \int_{\gamma} w=\int_{0}^{1} f(t)dt$$
where
$\gamma^*(w)=f(t)dt$
defines  a function  on  $P(X)$.
This defines an element of Hom$(\ZZ[\Gamma],\mathbb C)$ if and only if
$w$ is closed. Hence this only detects elements of $\Gamma$
visible in the homology of $X$ - it vanishes on $J^2$ ($J$ denotes the
augmentation ideal of $\mathbb Z[\pi_1(X, x_0)]$ here).

The iterated integrals studied by Chen (e.g. \cite{[C]}) detect
more elements of the group ring. Specifically,
suppose that $w_1, w_2,...,w_r \in E^1(X)$, where
$E^1(X)$ denotes the space of smooth 1-forms on $X$.
We will write $$w_1 \dots w_s =w_1
\otimes \dots \otimes w_s  \in \bigotimes ^s E^1(X)$$ and
call it a ``product" of the $w_i$'s. We set $w_1 \dots
w_s=1$, when $s=0.$

If $\gamma$ is a path on $X$, we set
\begin{equation}
\int_{\gamma} w_1 w_2...w_r=
{\int ...\int}_{0\leq t_1 \leq t_2 ...\leq t_r \leq 1}
f_1(t_1)f_2(t_2)...f_r(t_r) dt_1dt_2...dt_r,
\end{equation}
where $\gamma^*(w_i)=f_i(t)dt$.
This defines  a function on the space of paths of $X$  which will
be denoted by  $\int w_1...w_r$ and  is called a
iterated line integral of  length $r$.   A linear combination  of
such  functions is called an {\em iterated integral} and  its
length is the length of the longest line integral. However, it is
not necessarily a homotopy functional.

Let $B_s(X)$ denote the space of iterated integrals of
length $\leq s$. If $\I$ is in $B_s(X)$ and $\alpha \in P(X)$ we denote
the evaluation map by $\langle \I,\alpha \rangle$. We
extend it to all $1$-chains by linearity.

The next theorem states that in some cases an iterated integral
can be modified to be a homotopy functional.

\begin{thm} [Chen](\cite{[C]}, Section 3.) Let $X$ be a connected, smooth
manifold with $H^2(X)=0$ and let $w_1, \dots w_s$ be closed
$1$-forms on $X$.
Then there is an $\I \in B_s(X)$ which is a homotopy
functional and a $\K \in B_{s-1}(X)$ satisfying
$$\langle \I, \alpha \rangle=\int_{\alpha} w_{1} \dots
w_{s}+ \langle \K, \alpha \rangle,$$
for each path $\alpha \in P(X)$.
\label{ccc}
\end{thm}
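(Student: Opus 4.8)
The plan is to construct $\I$ by successive approximation, starting from the naive iterated integral $\int w_1\cdots w_s$ and correcting it, length by length, until it becomes a homotopy functional. The obstruction to being a homotopy functional is measured by a "boundary" operation on iterated integrals: for a piecewise smooth loop one compares the value on a concatenation $\alpha\beta$ with the values on $\alpha$ and on $\beta$, and one also checks invariance under a homotopy that fixes endpoints. Chen's fundamental computation, which I would recall first, is the formula
$$
\langle \int w_1\cdots w_s, \alpha\beta\rangle=\sum_{i=0}^{s}\langle \int w_1\cdots w_i,\alpha\rangle\,\langle \int w_{i+1}\cdots w_s,\beta\rangle,
$$
together with the fact that if all the $w_j$ are closed then, modulo iterated integrals of shorter length, the failure of $\int w_1\cdots w_s$ to be a homotopy functional is governed by the expressions $\int w_1\cdots (w_i\wedge w_{i+1})\cdots w_s$ — and these involve the wedge products $w_i\wedge w_{i+1}$, which are closed $2$-forms.

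**The inductive correction step.** Here is where the hypothesis $H^2(X)=0$ enters, and this is the step I expect to be the main obstacle to write cleanly. Because $H^2(X)=0$, every closed $2$-form $w_i\wedge w_{i+1}$ is exact, say $w_i\wedge w_{i+1}=d\eta_i$ for some $1$-form $\eta_i$. One then replaces the offending length-$s$ piece by a modified length-$s$ iterated integral in which the consecutive pair $w_i w_{i+1}$ is partly absorbed using $\eta_i$; the precise bookkeeping (Chen's "reduced bar construction" or an explicit formula with signs) shows that this modification changes the non-homotopy-invariant part of $\int w_1\cdots w_s$ only by an iterated integral of length $\le s-1$. Iterating this over all consecutive positions $i$ removes the length-$s$ obstruction at the cost of introducing new terms of length $\le s-1$. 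One then has a length-$s$ iterated integral whose failure to be a homotopy functional is concentrated in length $\le s-1$; descending induction on length (the base case length $0$ or $1$ being trivial, since closed $1$-forms already integrate to homotopy functionals as noted in the text) allows us to keep subtracting correction terms of strictly smaller length until the result is genuinely a homotopy functional.

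**Assembling $\I$ and $\K$.** Let $\I\in B_s(X)$ be the homotopy functional produced by the procedure above. By construction
$$
\I=\int w_1\cdots w_s+\K,
$$
where $\K$ is the sum of all the correction terms introduced along the way; every such term has length $\le s-1$, so $\K\in B_{s-1}(X)$. Evaluating on an arbitrary path $\alpha\in P(X)$ gives exactly
$$
\langle \I,\alpha\rangle=\int_\alpha w_1\cdots w_s+\langle \K,\alpha\rangle,
$$
which is the assertion. The only place connectedness of $X$ is used is to ensure that all paths can be compared after translation to the basepoint $x_0$, so that "homotopy functional" is the right notion throughout; the hypothesis $H^2(X)=0$ is used exactly once per length-reduction, to solve $d\eta_i=w_i\wedge w_{i+1}$. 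For the application to the modular curve in Section 4, the relevant $X$ will be (an open piece of) $Y(\Gamma)$, a non-compact Riemann surface, for which $H^2$ indeed vanishes, so the theorem applies.
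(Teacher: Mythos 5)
Your successive-correction argument is essentially the paper's own proof: unwinding your iteration produces exactly the extended defining system for the Massey product of $w_1,\dots,w_s$ (the auxiliary forms $w_{12}, w_{123},\dots$ whose existence uses $H^2(X)=0$), with $\K$ the sum of the lower-length correction integrals, and like the paper you defer the detailed verification of homotopy invariance to Chen. The only point to note is that $H^2(X)=0$ is needed at every level of the hierarchy, where one must also check that the combined obstruction $2$-forms such as $w_1\wedge w_{23}+w_{12}\wedge w_3$ are closed before taking primitives; this is left implicit in your sketch, just as it is in the paper.
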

\begin{proof} An example of an $\I$ satisfying these conditions
can be constructed using an \it extended defining system for a Massey
product of \rm $w_1, \dots, w_s$: Fix smooth $1$-forms $w_{12}$,
$w_{23}, \dots, w_{123}, \dots$ such that
\begin{eqnarray*} &w_{1} \wedge w_{2}+dw_{12}=0, \dots, w_{s-1} \wedge
w_{s}+dw_{(s-1)s}=0, \dots \\
&  w_{1} \wedge w_{23}+w_{12} \wedge
w_{3}+dw_{123}=0, \dots  \\
&w_{1} \wedge w_{2 \dots s}+w_{12} \wedge
w_{3 \dots s}+\dots+dw_{1 \dots s}=0. \\
\end{eqnarray*}
We then set
$$\I=\int u$$
where
\begin{eqnarray*} &u:=
w_{1} w_{2} \dots w_{s}+ w_{12} w_{3} \dots
w_{s}+w_{1} w_{23} \dots
w_{s}+\dots+ \\
& w_{123} w_{4} \dots w_{s}+ w_{1} w_{234} \dots
w_{s}+ \dots +w_{1 \dots s}.
\end{eqnarray*}
Note that $\int u - \int w_{1}w_{2}\dots w_{s}$ is of
length $< s$. The proof of the independence of path can be found
in \cite{[C]}, page 366.
\end{proof}
If  $w$ is a  1-form and $\alpha,\beta$  are two loops based
at $x_0$, then it is easy to see
$$\langle \int w,(\alpha-1)(\beta-1)\rangle=0$$
We will need the following lemmas. The first
lemma generalizes the above comment.
\begin{lem}(\cite{[HR]}, Lemma 2.10, Proposition 2.13)
Let $w_1, \dots w_r$ be smooth $1$-forms on $X$ and let
$\alpha=\prod_{i=1}^{s}(\alpha_i-1)$, where $\alpha_i$ are loops based at
$x_0$. Then
$$\langle \int w_1 \dots w_r,\alpha \rangle=\begin{cases} 0 & \text{ if }
r<s\\
\prod_{i=1}^s \int_{\alpha_i} w_i & \text{ if }
r=s.\end{cases}$$
\label{hain1}
\end{lem}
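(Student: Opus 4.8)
The plan is to induct on $s$, using two standard properties of iterated integrals, both found in \cite{[C]}. The first is the composition-of-paths formula: writing $\mu\nu$ for the path obtained by traversing $\mu$ and then $\nu$,
$$\int_{\mu\nu} w_1\dots w_r=\sum_{j=0}^{r}\Bigl(\int_{\mu}w_1\dots w_j\Bigr)\Bigl(\int_{\nu}w_{j+1}\dots w_r\Bigr).$$
Since the $\alpha_i$ are loops based at $x_0$, every monomial occurring in $\alpha$ is again a loop at $x_0$, so after extending $\langle\,\cdot\,,\,\cdot\,\rangle$ bilinearly to $\ZZ[\Gamma]$ this becomes
$$\Bigl\langle\int w_1\dots w_r,\,PQ\Bigr\rangle=\sum_{j=0}^{r}\Bigl\langle\int w_1\dots w_j,\,P\Bigr\rangle\Bigl\langle\int w_{j+1}\dots w_r,\,Q\Bigr\rangle$$
for all $P,Q\in\ZZ[\Gamma]$. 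The second property is that an iterated line integral of length $\geq 1$ vanishes on the constant path, while the length-$0$ integral is the constant function $1$ on $P(X)$; hence $\langle\int w_1\dots w_r,\gamma-1\rangle=\int_{\gamma}w_1\dots w_r$ for every loop $\gamma$ when $r\geq 1$, and $\langle\int w_1\dots w_r,\gamma-1\rangle=0$ when $r=0$. Together these two remarks are precisely the statement of the lemma when $s=1$.

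For the inductive step ($s\geq 2$) I would write $\alpha=(\alpha_1-1)\beta$ with $\beta=\prod_{i=2}^{s}(\alpha_i-1)\in J^{s-1}$ and apply the bilinear composition formula:
$$\Bigl\langle\int w_1\dots w_r,\,\alpha\Bigr\rangle=\sum_{j=0}^{r}\Bigl\langle\int w_1\dots w_j,\,\alpha_1-1\Bigr\rangle\Bigl\langle\int w_{j+1}\dots w_r,\,\beta\Bigr\rangle.$$
The $j=0$ term is annihilated by the factor $\langle 1,\alpha_1-1\rangle=0$; for $j\geq 1$ the first factor equals $\int_{\alpha_1}w_1\dots w_j$. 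For the second factor I would apply the inductive hypothesis to the $s-1$ loops $\alpha_2,\dots,\alpha_s$ and the $r-j$ forms $w_{j+1},\dots,w_r$: it vanishes unless $r-j=s-1$, and when $r-j=s-1$ it equals $\prod_{i=2}^{s}\int_{\alpha_i}w_i$. Now one only has to chase indices. If $r<s$, then $r-j\leq r-1\leq s-2$ for every $j\geq 1$, so every surviving summand has a vanishing second factor and the total is $0$. If $r=s$, the unique $j\geq 1$ with $r-j=s-1$ is $j=1$, which contributes $\int_{\alpha_1}w_1\cdot\prod_{i=2}^{s}\int_{\alpha_i}w_i=\prod_{i=1}^{s}\int_{\alpha_i}w_i$, while all other summands vanish; this is the assertion.

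Since the composition formula is quoted from \cite{[C]}, the argument is essentially routine index-chasing and I do not anticipate a serious obstacle. The one point worth watching is that the sum above contains, at $j=0$, the quantity $\langle\int w_1\dots w_r,\beta\rangle$ with $\beta\in J^{s-1}$ and possibly $r>s-1$ (this happens exactly in the case $r=s$), a value the inductive hypothesis does not supply; but it is always multiplied by $\langle 1,\alpha_1-1\rangle=0$, so the induction is self-contained and closes without ever needing to evaluate an iterated integral of length exceeding the order of the group-ring element it is paired with.
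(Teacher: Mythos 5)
Your argument is correct. Note, however, that the paper does not prove this lemma at all: it is quoted verbatim from Hain (Lemma 2.10 and Proposition 2.13 of \cite{[HR]}), so there is no in-paper proof to compare against. What you have done is re-derive the cited statement from the other cited ingredient, namely the composition-of-paths formula (the paper's Lemma \ref{hain2}, Hain's Proposition 2.9), together with the two trivial observations that a length-$\geq 1$ iterated line integral vanishes on the constant path and that the empty product integrates to $1$; this is essentially Hain's own argument, and your induction on $s$ closes cleanly. The two points that need care are exactly the ones you flag: (i) the bilinear extension of the composition formula to $\ZZ[\Gamma]\times\ZZ[\Gamma]$ is legitimate because the pairing is defined on $1$-chains by linearity and each term of the composition formula is a product of a functional of the first path and a functional of the second, and because every monomial in $\prod_i(\alpha_i-1)$ (including the identity, i.e.\ the constant path) is a loop at $x_0$, so all concatenations are defined; (ii) the $j=0$ term, where the inductive hypothesis says nothing because the length $r$ may exceed $s-1$, is killed by the factor $\langle 1,\alpha_1-1\rangle=1-1=0$, so the induction never needs that value. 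With those two points made explicit, the index chase gives vanishing for $r<s$ and the product $\prod_{i=1}^s\int_{\alpha_i}w_i$ for $r=s$, as required; the only thing your write-up leaves implicit is that the iterated integrals here are genuine functionals on paths (not homotopy classes), which is also how the paper uses them, so no homotopy invariance is needed anywhere.
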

The second lemma describes what happens under composition of
paths.
\begin{lem}(\cite{[HR]}, Prop. 2.9)
Let $w_1, \dots w_s$ be smooth $1$-forms on $X$ and let $\alpha,
\beta$ be paths such that $\alpha(1)=\beta(0)$. Then
\begin{eqnarray*}
\langle \int w_1 \dots w_s,\alpha \beta \rangle= \langle \int w_1 \dots
w_s,\alpha \rangle+ \langle
\int w_1 \dots w_s, \beta \rangle\\+\sum_{j=1}^{s-1}\langle \int w_1 \dots
w_j,\alpha \rangle \langle \int w_{j+1} \dots w_s, \beta \rangle.
\end{eqnarray*}
\label{hain2}
\end{lem}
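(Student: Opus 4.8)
The plan is to prove the identity directly from the definition of the iterated line integral, by partitioning the domain of integration according to where the composite path $\alpha\beta$ crosses over from $\alpha$ to $\beta$. First I would fix a parametrization of $\alpha\beta\colon[0,1]\to X$ in which $\alpha$ is traversed over $[0,\tfrac12]$ and $\beta$ over $[\tfrac12,1]$. This costs nothing, since the iterated line integral is invariant under orientation-preserving reparametrization of the path --- a fact that reduces, via a coordinatewise change of variables, to the ordinary one-variable substitution rule --- and the same invariance will let me, after rescaling $[0,\tfrac12]$ and $[\tfrac12,1]$ affinely onto $[0,1]$, identify an iterated line integral over the first (resp.\ second) half of $\alpha\beta$ with the corresponding iterated integral over $\alpha$ (resp.\ over $\beta$).

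Concretely, writing $(\alpha\beta)^*(w_i)=f_i(t)\,dt$, the quantity $\langle\int w_1\cdots w_s,\alpha\beta\rangle$ is the integral of $f_1(t_1)\cdots f_s(t_s)$ over the simplex $\{0\le t_1\le\cdots\le t_s\le 1\}$. I would split this simplex, up to a Lebesgue-null set, into the $s+1$ pieces
$$\Delta^{(j)}=\bigl\{\,0\le t_1\le\cdots\le t_j\le\tfrac12\le t_{j+1}\le\cdots\le t_s\le 1\,\bigr\},\qquad j=0,\dots,s,$$
the one indexed by $j$ collecting the ordered configurations with exactly $j$ of the times in the first half. By Fubini, the integral of $f_1(t_1)\cdots f_s(t_s)$ over $\Delta^{(j)}$ is the product of the integral of $f_1(t_1)\cdots f_j(t_j)$ over $\{0\le t_1\le\cdots\le t_j\le\tfrac12\}$ with the integral of $f_{j+1}(t_{j+1})\cdots f_s(t_s)$ over $\{\tfrac12\le t_{j+1}\le\cdots\le t_s\le 1\}$. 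By the reparametrization remarks above the first factor is $\langle\int w_1\cdots w_j,\alpha\rangle$ and the second is $\langle\int w_{j+1}\cdots w_s,\beta\rangle$, where the empty-product convention $w_1\cdots w_s=1$ for $s=0$ fixed in Section~3 disposes of the extreme indices $j=s$ and $j=0$. Summing over $j$, the term $j=s$ contributes $\langle\int w_1\cdots w_s,\alpha\rangle$, the term $j=0$ contributes $\langle\int w_1\cdots w_s,\beta\rangle$, and the terms with $1\le j\le s-1$ assemble into the stated sum; this is the claimed formula.

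I do not expect a real obstacle: the lemma is a formal consequence of Fubini's theorem and of reparametrization invariance of the iterated integral, with no closedness hypothesis on the $w_i$ needed, and indeed it is quoted from \cite{[HR]}, Prop.~2.9. The only points deserving a moment of care are verifying that the overlaps of the regions $\Delta^{(j)}$ (where some $t_i=\tfrac12$) are null and hence negligible, and making the reparametrization invariance explicit --- both entirely routine. (One could instead induct on $s$ by peeling off the last form $w_s$, but the simplex decomposition above is the most transparent route.)
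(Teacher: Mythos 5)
Your proof is correct. The paper offers no proof of this lemma at all --- it is quoted directly from Hain \cite{[HR]}, Prop.~2.9 --- and your argument (splitting the simplex $\{0\le t_1\le\cdots\le t_s\le 1\}$ according to how many of the times fall in the $\alpha$-half of $\alpha\beta$, applying Fubini on each piece, and using reparametrization invariance together with the empty-product convention to handle $j=0$ and $j=s$) is precisely the standard proof found in that reference, with the two delicate points (the null overlaps where some $t_i=\tfrac12$, and the invariance under affine rescaling of the two halves) correctly identified and dispatched.
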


An application of Lemma \ref{hain2} and Theorem \ref{ccc} is the
following.
\begin{lem} Let $X$ be a connected, smooth manifold
with $H^2(X)=0$ and let $w_1, \dots w_s$ be closed $1$-forms on
$X$.
If $\{w_{12}, \dots, w_{(s-1) s}, \dots, w_{1 \dots s}\}$
is an extended defining system for a Massey product of $w_1, \dots
w_s$, we set \newline
\centerline{$u_j=w_{1} \dots w_{j}+ w_{12} w_{3} \dots
w_{j}+w_{1} w_{23} \dots
w_{j}+ \dots +w_{1 \dots j}$, \, \, $j=1, \dots s$
} \newline
\centerline{$u^j=w_{j+1} \dots w_{s}+ w_{(j+1) (j+2)} w_{j+3} \dots
w_{s}+w_{j+1} w_{(j+2) (j+3)} \dots
w_{s}+ \dots +w_{j+1 \dots s}$ \, \, $j=1, \dots s-1$.
} \newline
We also set $u:=u_s.$
Then for each pair of paths $\alpha$ and $\beta$ on $X$ with $\alpha(1)=
\beta(0)$, we have
\begin{equation}
\int_{\alpha \beta} u=\int_{\alpha} u+ \int_{\beta}u
+\sum_{j=1}^{s-1} \int_{\alpha} u_j \int_{\beta} u^j.
\end{equation}
\label{identity}
\end{lem}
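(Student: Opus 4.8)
The plan is to deduce the identity directly from Lemma \ref{hain2}, using only the combinatorial structure of the forms $u_j$ and $u^j$. First I would record the key observation that for each $j$ the form $u_j$ is precisely the degree-$\le j$ truncation of $u = u_s$: each summand of $u$ is a product $w_{i_1\dots} w_{\dots} \dots w_{\dots s}$ obtained by grouping consecutive indices, and the summands all of whose factors involve only indices $\le j$ are exactly the summands of $u_j$. Symmetrically, $u^j$ collects the ``tail'' products built from the indices $j+1, \dots, s$. Thus $u$ decomposes, term by term, according to where the ``first block'' of a grouped product ends; writing a generic summand of $u$ as (a grouped product on a prefix of $\{1,\dots,j\}$ that exhausts it) followed by (a grouped product on the complementary suffix), one sees that the prefix part ranges over the summands of some $u_j$ and the suffix part over the summands of the corresponding $u^j$, together with the two extreme cases $j=s$ (the whole product lies in the first block, i.e. the summand of $u_s=u$ with last factor $w_{1\dots s}$ split appropriately) and $j=0$. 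This bookkeeping is the substantive content, and it is the step I expect to be the main obstacle: it is not a single line integral but a linear combination, so one must check that Lemma \ref{hain2} applies summand-by-summand in a way that respects the block decomposition.

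Concretely, I would proceed as follows. Fix paths $\alpha,\beta$ with $\alpha(1)=\beta(0)$. Apply Lemma \ref{hain2} to each line integral $\int w_{i_1}\cdots w_{i_p}$ appearing as a summand of $u$ (these are the grouped products, of total length $p\le s$), obtaining
\begin{equation*}
\int_{\alpha\beta} w_{i_1}\cdots w_{i_p}=\int_\alpha w_{i_1}\cdots w_{i_p}+\int_\beta w_{i_1}\cdots w_{i_p}+\sum_{t=1}^{p-1}\int_\alpha w_{i_1}\cdots w_{i_t}\int_\beta w_{i_{t+1}}\cdots w_{i_p}.
\end{equation*}
Summing over all summands of $u$, the first two groups of terms reassemble into $\int_\alpha u+\int_\beta u$. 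For the cross terms, I would reorganize the double sum by the value $j$ of the largest original index appearing among $w_{i_1},\dots,w_{i_t}$ (equivalently, the point in $\{1,\dots,s-1\}$ at which the prefix/suffix split of the underlying index set occurs). For a fixed $j$, the prefix factors $\int_\alpha w_{i_1}\cdots w_{i_t}$ that can occur are exactly the summands of $\int_\alpha u_j$, and the complementary suffix factors $\int_\beta w_{i_{t+1}}\cdots w_{i_p}$ are exactly the summands of $\int_\beta u^j$; moreover every (summand of $u_j$, summand of $u^j$) pair arises from a unique summand of $u$ in a unique way, because concatenating a grouped product on $\{1,\dots,j\}$ with a grouped product on $\{j+1,\dots,s\}$ yields a grouped product on $\{1,\dots,s\}$, i.e. a summand of $u$. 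Hence the cross terms collapse to $\sum_{j=1}^{s-1}\int_\alpha u_j\int_\beta u^j$, which is the claimed formula.

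The only points requiring care beyond this are (i) the bijection in the previous sentence — that the map ``(prefix summand of $u_j$, suffix summand of $u^j$) $\mapsto$ their concatenation, with cut after position $t$'' is a bijection onto (summand of $u$, internal cut position with max prefix index $=j$) — which follows because a grouped product on $\{1,\dots,s\}$ together with a distinguished block-boundary is the same data as the pair of grouped products on the two halves; and (ii) that no summand of $u$ is lost or double-counted, which is immediate once one notes that the summands of $u$ are indexed by the compositions of $s$ into consecutive blocks (equivalently by subsets of $\{1,\dots,s-1\}$ of block-boundaries), and the defining sums for $u_j,u^j$ use the analogous indexing on $\{1,\dots,j\}$ and $\{j+1,\dots,s\}$. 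Since closedness of the $w_i$ is not used in the statement (only in the construction of the defining system, which is given to us), no analytic input beyond Lemma \ref{hain2} is needed, and the proof is purely formal.
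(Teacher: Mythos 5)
Your proposal is correct and follows essentially the same route as the paper: apply Lemma \ref{hain2} to each grouped-product summand of $u$, then reorganize the cross terms according to the index $j$ where the underlying index set splits, so that the prefix pieces sum to $\int_\alpha u_j$ and the suffix pieces to $\int_\beta u^j$. The paper states this reorganization more tersely (writing the whole right-hand side as a single sum over $j=0,\dots,s$ with the conventions $\int v_0=\int v^{s+1}\equiv 1$), while you make the underlying bijection between (prefix, suffix) pairs and summands-with-a-cut explicit, but the argument is the same.
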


\begin{proof}

From the construction of $u,$ all combinations of $1,2,\dots,s$
appear in $u$ (in this order) as indices of ``products" of $w$'s.
Applying Lemma \ref{hain2}, we can decompose the integral of each
individual ``product" as
a sum of products of iterated integrals on $\alpha$ and on $\beta$. Thus,
$$\int_{\alpha \beta} u=\sum_{j=0}^s \Big ( \sum_{v_j} \Big (
\sum_{v^j} \int_{\alpha} v_j \int_{\beta} v^j \Big ) \Big )$$
where $\sum_{v_j}$ (resp. $\sum_{v^j}$)
ranges over all ``products" $v_j$ (resp. $v^j$) with index sequences formed
by the integers $1, \dots, j$ (resp. $j+1, \dots s$).
(Here we have set $\int v_0=\int v^{s+1} \equiv 1.$) For example
$\sum_{v_3}\int_{\alpha}v_3=\int_{\alpha}( w_1w_2w_3+ w_1w_{23}+
w_{12}w_3+w_{123})$.

From the defining equations of $u_j$ and $u^j$,
$$\sum_{v_j} \int_{\alpha} v_j = \int_{\alpha} u_j \text{ and } \sum_{v^j}
\int_{\beta} v^j = \int_{\beta} u^j.$$
This proves the identity.
\end{proof}

\section{The classification of higher order automorphic forms}

We now restrict ourselves to the case when $X=Y(\Gamma),$ where
$\Gamma \subset$PSL$_2(\mathbb Z)$ is a Fuchsian group of the first kind
with parabolic elements, as in Section 2.
In addition, we assume that $\Gamma$ has no elliptic elements and hence
$\pi_1(X)=\Gamma$.
In this section, we complete the classification of the vector space
of automorphic forms of order $s$ and weight $k$ for such groups.
We first work
in the case of weight $0$ proving what amounts to a variation of
the general statement we will eventually prove.

Let $\tilde M^{s+1}_0=\tilde M^{s+1}_0(\Gamma)$ denote the space
of $f \in M^{s+1}_0=M^{s+1}_0(\Gamma)$ such that $f|_0 \delta \in
\mathbb C$ for every $\delta$ in $J^s$.
\begin{prop} The sequence
$$0 \to M_0^s \hookrightarrow \tilde M_0^{s+1}
\xrightarrow{\psi}
\bigoplus_{i=1}^{(2g+m-1)^s} \mathbb C \to 0$$
with $\psi$ defined as in Proposition \ref{exact} is exact.
\label{wt0}
\end{prop}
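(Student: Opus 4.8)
The plan is to verify the two things Proposition \ref{exact} leaves open in this setting: that the arrow $\psi$ now lands in $\bigoplus \mathbb{C}$ (rather than $\bigoplus M_0^1$) and, crucially, that it is surjective. Exactness at $M_0^s$ and at $\tilde M_0^{s+1}$ is immediate from Proposition \ref{exact} together with the observation that $M_0^s \subset \tilde M_0^{s+1}$ (since $f \in M_0^s$ has $f|_0\delta = 0 \in \mathbb{C}$ for $\delta \in J^s$) and that for $f \in \tilde M_0^{s+1}$ each component $f|_0(\gamma_{i_1}-1)\cdots(\gamma_{i_s}-1)$ is a constant by definition of $\tilde M_0^{s+1}$; moreover such a constant function is trivially in $M_0^1$, so the components are genuinely scalars and the target is as claimed. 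The content is therefore entirely in surjectivity.

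For surjectivity I would proceed as follows. Since $H^2(Y(\Gamma)) = 0$ (the modular curve is an open Riemann surface, being non-compact because $\Gamma$ has parabolic elements) and $\pi_1(Y(\Gamma)) = \Gamma$, Chen's Theorem \ref{ccc} applies. Given any prescribed tuple of constants indexed by $s$-tuples $(i_1,\dots,i_s)$ with $1 \le i_j \le 2g+m-1$, I want to build an $f \in \tilde M_0^{s+1}$ realizing it. The idea is to take closed $1$-forms $w_1, \dots, w_s$ on $Y(\Gamma)$ and form the homotopy functional $\I = \int u \in B_s(Y(\Gamma))$ furnished by Theorem \ref{ccc}; pulling back along $\pi: \HH \to Y(\Gamma)$ and using that $\HH$ is simply connected, the function $z \mapsto \langle \I, \pi(\text{path from } z_0 \text{ to } z)\rangle$ is a well-defined smooth function $f$ on $\HH$. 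Lemma \ref{identity} (which is exactly the cocycle-type identity governing how $\int u$ behaves under path composition) translates into the statement that $f|_0(\gamma-1)$ for $\gamma \in \Gamma$ is again of the same shape but with $u$ replaced by a shorter iterated integral $u^j$-type object, so that iterating $s$ times gives $f|_0\delta \in \mathbb{C}$ for $\delta \in J^s$ (hence $f \in \tilde M_0^{s+1}$), and iterating $s+1$ times gives $f|_0\delta = 0$ for $\delta \in J^{s+1}$ (hence $f \in M_0^{s+1}$). Finally, Lemma \ref{hain1} computes the constant $\langle \int w_1\cdots w_s, \prod_{i=1}^s(\gamma_{j_i}-1)\rangle = \prod_{i=1}^s \int_{\gamma_{j_i}} w_i$, so by choosing the $w_i$ to be suitable linear combinations of a dual basis of closed $1$-forms to the homology classes of $\gamma_1, \dots, \gamma_{2g+m-1}$ — available since these generators give a basis of $H_1(Y(\Gamma))$ — one arranges $\psi(f)$ to be any desired tuple. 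A linearity/dimension-count argument then upgrades "one can hit these product tuples" to full surjectivity onto $\bigoplus \mathbb{C}$.

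I expect the main obstacle to be making the induction on $s$ clean: one must identify precisely what $f|_0(\gamma-1)$ is as a path-integral functional and check it is again an antiderivative of an iterated integral of length $s-1$ of the same Massey-product form (built from the sub-systems $u^j$), so that the hypotheses of Lemma \ref{identity} persist at each stage. The bookkeeping of index sequences in Lemma \ref{identity} — which "products" $v_j, v^j$ survive and how $u_j, u^j$ reassemble them — is what makes this work, but it has to be threaded carefully with the $|_0\,$-action of $\Gamma$ (weight $0$ is what keeps the automorphy factor trivial and lets the path-composition formula map directly onto the group-ring action). A secondary point to handle is the growth condition at the cusps implicit in "$f \in M_0^{s+1}$"; here one uses that the $w_i$ can be chosen holomorphic (or at worst with log-growth) near the cusps so that their iterated integrals have at most polynomial growth, which is the condition packaged into the definition of $M_k^s$.
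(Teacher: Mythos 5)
Your proposal is correct in substance and, for the core construction, coincides with the paper's: exactness at the first two terms is quoted from Proposition \ref{exact}, and the candidate elements of $\tilde M_0^{s+1}$ are exactly the ``antiderivatives'' $z \mapsto \langle \I, \{x_0,z\}\rangle$ of the Chen homotopy functionals attached to products of closed $1$-forms (Theorem \ref{ccc}), with Lemma \ref{identity} and Lemma \ref{hain1} combining to show that $f|_0\delta$ equals the product of periods $\prod_k \int_{\{z,\gamma_k z\}} w_{i_k}$ for $\delta=\prod_k(\gamma_k-1)\in J^s$; this is precisely the paper's Lemma \ref{mainlem}, and the base-point/homotopy bookkeeping you flag as the main obstacle is exactly what that lemma carries out. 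Where you genuinely diverge is the endgame. You choose the $w_i$ from a basis of closed $1$-forms dual to the homology classes of $\gamma_1,\dots,\gamma_{2g+m-1}$ (legitimate: with no elliptic elements these classes freely generate $H_1(Y(\Gamma))$ and the de Rham pairing is perfect), so that $\psi$ applied to your functions yields the Kronecker-delta tuples and surjectivity follows immediately by linearity, with no independence argument. The paper instead uses the $2g+m-1$ forms coming from weight-$2$ modular forms (holomorphic cuspidal, their conjugates, and Eisenstein), proves the resulting $F_I$ are linearly independent modulo $M_0^s$ by induction via the injectivity of Eichler--Shimura (Lemma \ref{linindlem}), and concludes surjectivity by a dimension count; this costlier route pays off later, since Theorem \ref{mhs} needs the $F_I$ to be built from holomorphic and antiholomorphic forms in order to define the Hodge filtration. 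Two minor points: your concern about growth at the cusps is moot, since $M_k^s$ as defined in the paper carries no growth condition; and the constancy in $z$ of the periods $\int_{\{z,\gamma z\}} w$ requires only that $w$ be closed, not that it come from a modular form, so your more general choice of forms causes no difficulty there.
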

\begin{proof} The exactness of the middle term follows from Proposition
\ref{exact} because $\tilde M_0^{s+1} \subset M_0^{s+1}$. From that, we
observe that to prove surjectivity, it suffices to construct
$(2g+m-1)^s$ linearly independent elements of $\tilde
M_0^{s+1}/M_0^s.$

Let $\{f_1, \dots, f_g, g_1, \dots, g_{m-1} \}$ be a basis of the
space of holomorphic modular forms of weight $2$ for $\Gamma$ with
$f_i$ cuspidal and $g_i$ non-cuspidal. Let $\{w_i\}_{i=1}^g$ be
the differentials on $Y(\Gamma)$ corresponding to
$\{f_i(z)dz\}_{i=1}^g,$ $\{w_{i}\}_{i=g+1}^{2g}$ the differentials
corresponding to $\{\overline{ f_{i-g}(z)} d \bar z
\}_{i=g+1}^{2g}$ (where the bar stands for complex conjugation)
and $\{w_{2g+i}\}_{i=1}^{m-1}$ the differentials corresponding to
$\{g_i(z) dz \}_{i=1}^{m-1}$.

Then, all $w_i,$'s are closed, as are $w_i \wedge w_j$ ($i, j=1,
\dots, 2g+m-1$). Furthermore, since $Y(\Gamma)$ is non-compact,
from the Gysin exact sequence, we have
$$H^2(Y(\Gamma),\C)=0.$$
Hence we can apply Theorem \ref{ccc} to any selection of $s$ forms from
$\{w_i; i=1, \dots, 2g+m-1 \}$. Let $I=(i_1,i_2,\dots , i_s)$ be
any indexing
vector with elements in $\{1, \dots, 2g+m-1\}$ and let $\I_I,\K_I$ and
$u_I$ be induced by $\{w_{i_j}\}_{j=1}^s$ as in the proof of
Theorem \ref{ccc}. Let
$x_0$ be a point in the upper half-plane lying over a point, which
we will also denote by $x_0$ on the curve. If we let $\{x_0, b\}$
denote the image under $\pi$ of the line path from $x_0$ to $b$ in
$\mathfrak H$, then the function
\begin{equation}
F_{\I_I}(z):=F_{I}(z):=\langle \I_{I}, \{x_0, z\}  \rangle , \qquad z \in
\mathfrak H
\label{antiderivative}
\end{equation}
is well-defined and independent of the path from $x_0$ to $z$, since
$\pi$ maps homotopic paths to homotopic paths.

We will now show the
$F_I$'s are in $\tilde{M}_0^{s+1}(\Gamma)$ for each $I$.
In Lemma \ref{linindlem} we will further show that they are linearly
independent modulo $M_0^s(\Gamma)$. As there are $(2g+m-1)^s$ of them,
these two facts will suffice to prove Prop. \ref{wt0}.

We first use Lemma
\ref{identity} to compute $F_I|_0 \delta$, ($\delta
\in J^s$) and show that it is a constant. We use the notation
$\{a, b\}$ for the path $\{a, x_0\}$ followed by $\{x_0, b\}.$

\begin{lem} Let $I=(i_1,\dots,i_s)$ be an indexing vector
with elements in $\{1, \dots 2g+m-1\}$ and set $F_I(z)=\langle
\I_I,\{x_0,z\} \rangle$. Then, for any $\delta=\prod_{k=1}^{s}
(\gamma_k-1) \in J^s$,
$$(F_I|_0\delta)(z)=\prod_{k=1}^{s} \int_{\{z, \gamma_k z\}} w_{i_k}$$
In particular, since the $w_{i}$ correspond to classical modular
forms of weight $2$ and their conjugates, this expression is
independent of $z$ and so $F_I(z) \in \tilde M^{s+1}_0(\Gamma)$.
\label{mainlem}
\end{lem}
\begin{proof}
For every $\gamma \in \Gamma,$
\begin{equation}
F_{I}|_0 (\gamma-1)(z)=\langle \I_{I}, \{x_0, \gamma z\}
\rangle -\langle \I_{I}, \{x_0, z\} \rangle.
\nonumber
\end{equation}
Combining this with Lemma \ref{identity} with $\alpha=\{x_0, z\}$
and $\beta=\{z, \gamma z\}$  gives
\begin{equation}
F_{I}|_0(\gamma-1)(z)=\langle \I_{I}, \{z, \gamma z\} \rangle +
\sum_{j=1}^{s-1} \int_{\{x_0, z\}} u_{j} \int_{\{z, \gamma z\}}
u^j
\label{step2}
\end{equation}
Observe that, if $\gamma_1, \dots, \gamma_s$ are in $\Gamma$, we
have the following expressing generalizing \eqref{prod1},
\begin{equation}
\prod_{k=1}^s(\gamma_k-1)=(\gamma_1 \dots
\gamma_s-1)+\dots+(-1)^{s-1}(\gamma_1-1)\dots+(-1)^{s-1}(\gamma_s-1).
\nonumber
\end{equation}
Combining this with \eqref{step2}, we have
\begin{align}
F_{I}|_0(\gamma_1-1)\dots (\gamma_s-1)(z) &= F_{I}|_0((\gamma_1
\dots \gamma_s-1)+\dots+(-1)^{s-1}(\gamma_s-1))(z) \nonumber \\
&= \langle \I_{I}, \{z, \gamma_1 \dots \gamma_s z\} \rangle +\dots
+(-1)^{s-1}\langle \I_{I}, \{z, \gamma_s z\} \rangle \nonumber \\
&+ \sum_{j=1}^{s-1} \int_{\{x_0, z\}} u_j \Big ( \int_{\{z,
\gamma_1 \dots \gamma_s z\}} u^j+\dots +(-1)^{s-1}\int_{\{z,
\gamma_s z\}} u^j\Big ) \nonumber.
\end{align}
Next we observe that, in $\HH$, the path from $\gamma z$ to $\gamma \delta
z$ passing through $x_0$ is homotopic to $\gamma (z, \delta z),$
where $(z, \delta z)$ is the path from $z$ to $\delta z$ passing
through $x_0$. Since the image of $\gamma (z, \delta z)$ under
$\pi$ is $\{z, \delta z\},$ the loops $\{\gamma z, \gamma \delta
z\}$ and $\{ z, \delta z \}$ are homotopic in $X$ and hence $\{z, \gamma
\delta z \}$ is homotopic to $\{z, \gamma z \}\{z, \delta z \}.$
On the other hand, by the proof of Theorem \ref{ccc}, each $\int u^j$ is
homotopy
invariant. Therefore, by induction, for all $\gamma_1, \dots
\gamma_s \in \Gamma,$ we have
\begin{align}
F_{I}|_0(\gamma_1-1) \dots (\gamma_s-1)(z)= \langle \I_{I}, (\{z,
\gamma_1 z \}-1) \dots (\{z, \gamma_s z\}-1) \rangle
\nonumber
\\ + \left(
\sum_{j=1}^{s-1} \langle \int u_j, \{x_0, z\} \rangle \langle \int u^j,
(\{z, \gamma_1 z \}-1) \dots (\{z, \gamma_s z\}-1) \rangle \right)
\nonumber
\end{align}
Since the iterated integrals in the sum within the parenthesis are of
length $<s$ and since $\I_I \equiv \int \prod_{k=1}^s w_{i_k}$ up
to an iterated integral of length $<s$, by Lemma (\ref{hain1}) we
deduce
$$F_{I}|_0(\gamma_1-1) \dots (\gamma_s-1)(z)=
\int_{\{z, \gamma_1 z \}} w_{i_1}  \dots \int_{\{z, \gamma_s z
\}} w_{i_s}$$
\end{proof}

To complete the proof of Proposition \ref{wt0}
we show that the images of $F_{I}$ under the
natural projection
$$ \tilde M^{s+1}_0 \to \tilde M^{s+1}_0/M^{s}_0$$ are linearly
independent.

\begin{lem} Suppose there exist
complex numbers $k_{I}$ satisfying
$$\sum_I k_{I} F_{I} \in M_0^s(\Gamma)$$
where $I$ runs through the $(2g+m-1)^s$ possible $s$-tuples of
$\{1,\dots ,2g+m-1\}$. Then $k_I=0$ for all $I$.
\label{linindlem}
\end{lem}
\begin{proof} We proceed by induction. Suppose $s=1$ and that
there are complex numbers $k_r$ such that
$$\sum_{r=1}^{2g+m-1} k_{r} F_{r}  \in M_0^1(\Gamma)$$

Then, from Lemma \ref{mainlem}, for any $\gamma \in \Gamma$ we
have
$$\sum_{r=1}^{2g+m-1} k_{r} F_{r}|_0(\gamma-1)(z)=\sum_{r=1}
^{2g+m-1} k_{r} \int_{\{z, \gamma z\}} w_{r}=0$$
where $w_r$ are the (holomorphic or anti-holomorphic)
differential forms corresponding to the
basis of the space of weight $2$ modular forms for $\Gamma$
we fixed at the beginning of the proof of Proposition \ref{wt0}.
From the injectivity of the classical Eichler-Shimura
isomorphism, we have $k_{r}=0$ for all $r$.

Proceeding by induction, suppose there are complex numbers $k_{I}$
satisfying
$$\sum_I k_{I} F_{I} \in M_0^s $$
where $I$
runs through the $(2g+m-1)^s$
possible $s$-tuples $(i_1,\dots,i_s)$ of $\{1,\dots ,2g+m-1\}$. Then
$$\sum_I k_{I} F_{I} |_0(\gamma_1-1)\dots (\gamma_s-1)=0$$
for any $\gamma_1,\dots,\gamma_s$ in $\Gamma$.

Applying Lemma \ref{mainlem}, this is equivalent to
$$\sum_I k_I \int_{\{z, \gamma_1z \}} w_{i_1}\dots \int_{\{z, \gamma_s
z\}} w_{i_s}=0$$
or
\begin{multline*}
\sum_{i_s=1}^{2g+m-1} \left( \sum_{i_{s-1}=1}^{2g+m-1}\dots
\sum_{i_1=1}^{2g+m-1} k_{I} \int_{\{z, \gamma_1 z\}} w_{i_1}
\dots
\int_{\{z, \gamma_{s-1}z\}} w_{i_{s-1}} \right) \int_{\{z, \gamma_s
z \}}w_{i_s} \\
=\sum_{i_s=1}^{2g+m-1} A_{i_s} \int_{\{z, \gamma_{s}z \}}
w_{i_s}=0.
\end{multline*}
From the Eichler-Shimura isomorphism once again, we have that
$A_{i_s}=0$ for all $i_s$. For a fixed $i_s$, $A_{i_s}$ is
$$\sum_{I'} k_{I'\cup\{i_s\}} \int_{\{z, \gamma_1z \}} w_{i_1} \dots
\int_{\{z, \gamma_{s-1}z \}} w_{i_{s-1}}$$
where $I'$ runs through all possible $(s-1)$-tuples. By induction,
$$k_{I'\cup\{i_s\}}=0.$$
As this is true for all $i_s$, $k_I=0$ for all $I$ and the $F_I$
are linearly independent modulo $M_0^s(\Gamma)$.
\end{proof}

This completes the proof of Proposition \ref{wt0}.
\end{proof}
The following classification theorem is an
application of Proposition \ref{wt0}.
\begin{thm} Let $$\psi: M_k^{s+1} \to \bigoplus ^{(2g+m-1)^s}_{i=1}
M_k^1$$ be defined
by
$$\psi(f)=(f|_k(\gamma_{i_1}-1) \dots (\gamma_{i_s}-1))_{1 \le i_1,
\dots, i_s \le 2g+m-1}$$
Then,
$$0 \to M_k^s \hookrightarrow M_k^{s+1}
\xrightarrow{\psi}
\bigoplus_{i=1}^{(2g+m-1)^s} M_k^1 \to 0$$
is an exact sequence.
\label{rightexact}
\end{thm}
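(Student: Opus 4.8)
The plan is to reduce the only missing assertion, namely surjectivity of $\psi$, to the weight-$0$ result of Proposition \ref{wt0} by means of a trivial multiplicativity of the slash action. Exactness at $M_k^s$ and at $M_k^{s+1}$ is already provided by Proposition \ref{exact}, so it suffices to show that $\psi: M_k^{s+1}\to\bigoplus_{i=1}^{N}M_k^1$ is onto, where $N:=(2g+m-1)^s$.

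The first step is the observation that if $h\in M_k^1(\Gamma)$ and $F$ is any smooth function on $\HH$, then $(Fh)|_k\gamma=(F|_0\gamma)(h|_k\gamma)=(F|_0\gamma)\,h$ for every $\gamma\in\Gamma$, since $h|_k\gamma=h$; extending by $\ZZ$-linearity, $(Fh)|_k g=(F|_0 g)\,h$ for all $g\in\ZZ[\Gamma]$. Two consequences follow at once: if $F\in M_0^{s+1}(\Gamma)$ then $Fh\in M_k^{s+1}(\Gamma)$, and if moreover $F\in\tilde M_0^{s+1}(\Gamma)$ then, for $\delta_I:=(\gamma_{i_1}-1)\cdots(\gamma_{i_s}-1)\in J^s$, the product $(Fh)|_k\delta_I=(F|_0\delta_I)\,h$ is a \emph{constant} multiple of $h$.

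Next, fix functions $F_J\in\tilde M_0^{s+1}(\Gamma)$ with $J$ running over the $N$ tuples in $\{1,\dots,2g+m-1\}^s$ and whose classes form a basis of $\tilde M_0^{s+1}/M_0^s$ --- concretely, the iterated-integral functions built in the proof of Proposition \ref{wt0}. Put $c_{J,I}:=F_J|_0\delta_I\in\C$; this is a constant by the previous step (and equals the period product of Lemma \ref{mainlem}), and $\psi(F_J)$, viewed inside $\bigoplus_{i=1}^{N}\C$ via Proposition \ref{wt0}, is the vector $(c_{J,I})_I$. Since $\psi$ induces an isomorphism $\tilde M_0^{s+1}/M_0^s\xrightarrow{\sim}\bigoplus_{i=1}^{N}\C$ by Propositions \ref{exact} and \ref{wt0}, the $N$ vectors $(c_{J,I})_I$ form a basis of $\C^N$; equivalently, the $N\times N$ matrix $C=(c_{J,I})$ is invertible. (This invertibility is exactly what Lemma \ref{linindlem} proves.)

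Finally, given an arbitrary target $(h_I)_I\in\bigoplus_{i=1}^{N}M_k^1(\Gamma)$, I would set
$$f:=\sum_{I}\sum_{J}(C^{-1})_{I,J}\,F_J\,h_I .$$
Each term $F_J h_I$ lies in $M_k^{s+1}(\Gamma)$ by the first step, so $f$ does too, and since $(F_J h_I)|_k\delta_{I'}=c_{J,I'}\,h_I$ and $\sum_J(C^{-1})_{I,J}\,c_{J,I'}=\delta_{I,I'}$, one gets $f|_k\delta_{I'}=h_{I'}$ for every $I'$, that is $\psi(f)=(h_I)_I$. This establishes surjectivity and completes the proof. The argument is short because all the genuine content sits in Proposition \ref{wt0} (and ultimately in the Eichler--Shimura non-degeneracy used in Lemma \ref{linindlem}); the only new ingredient is the identity $(Fh)|_k g=(F|_0 g)\,h$, which is precisely what transports the invertible period matrix from weight $0$ to arbitrary weight $k$. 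Accordingly there is no real obstacle here, and the one point worth double-checking is that $F_J h$ remains in $M_k^{s+1}(\Gamma)$ --- which is immediate from that identity.
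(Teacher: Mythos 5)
Your proposal is correct and follows essentially the same route as the paper: the paper also reduces surjectivity to the weight-$0$ statement, taking $\Lambda_L\in\tilde M_0^{s+1}$ with $\Lambda_L|_0\delta_I=\delta_I^L$ (supplied directly by the surjectivity in Proposition \ref{wt0}) and setting $F=\sum_L f_L\Lambda_L$, using exactly your identity $(Fh)|_k g=(F|_0 g)h$ as the unstated ``easy computation.'' Your only cosmetic deviation is realizing the Kronecker-dual functions by inverting the period matrix $C$ of the $F_J$'s rather than quoting surjectivity of Proposition \ref{wt0} directly, which amounts to the same thing.
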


\begin{proof} In view of Prop. \ref{exact}, the only part that needs to be
proved is the surjectivity of $\psi.$ Let
$$(f_{I})_{I} \in \bigoplus_{i=1}^{(2g+m-1)^s} M_k^1$$
with $I$ ranging through the $(2g+m-1)^s$ possible $s$-tuples of
$\{1, \dots, 2g+m-1\}$.
We will show that there is a $F \in M_k^{s+1}$ such that
$\psi(F)=(f_I)_I$ or equivalently,
$$ F|_k(\gamma_{i_1}-1)\dots(\gamma_{i_s}-1)=f_{I}$$
for all $s$-tuples $I=\{i_i,\dots,i_s\}$.

From the surjectivity part of  Theorem \ref{wt0}, for each
$s$-tuple of integers $L=(l_1, \dots l_s)$ with $l_j  \in \{1,
\dots, 2g+m-1\}$, there is a $\Lambda_{L} \in \tilde M_0^{s+1}$
such that
$$\Lambda_{L}|_0(\gamma_{i_1}-1)\dots
(\gamma_{i_s}-1)=\delta_{I}^L$$
for any $s$-tuple $I$, where $\delta_I^L$ is the Kronecker delta
function of the $s$-tuple, namely $\prod_k \delta_{i_k}^{l_k}$.

An easy computation then shows that
$$F=\sum_{L}f_{L} \Lambda_{L}$$
is in $M_k^{s+1}$ and satisfies the desired equality.
\end{proof}

We shall finally use the last two propositions to show that the
direct sum of all $M_0^s$ can be endowed with a structure which in
some respects is very similar to a Mixed Hodge Structure.

\begin{thm} Set
$$V=\bigoplus_{s=1}^{\infty} M_0^s.$$ There exists \newline
$\bullet$ a increasing filtration
$W_{\bullet}$ on $V$ and \newline
$\bullet$ a decreasing filtration $F^{\bullet}$ on $V$\newline
such that, for each $m \in \mathbb N,$  the filtration
on $Gr_m(W^{\bullet})=W_m/W_{m-1}$ defined by
$$F^p(Gr_m(W^{\bullet}))=W_m \cap F^p /W_{m-1} \cap F^p$$
satisfies
$$F^p(Gr_m(W^{\bullet})) \oplus \bar F^{m-p+1}(Gr_m(W^{\bullet})) \cong
Gr_m(W^{\bullet})$$
\label{mhs}
\end{thm}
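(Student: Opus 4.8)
The plan is to build both filtrations directly from the order filtration on $V=\bigoplus_{s\ge 1}M_0^s$, using Theorem \ref{rightexact} in weight $0$ to identify the graded pieces. First I would define the weight filtration by declaring $W_m=\bigoplus_{s=1}^{m+1}M_0^s$ (an indexing shift so that $Gr_0(W^{\bullet})=M_0^1$, the space of genuine automorphic forms, sits in weight $0$). By Proposition \ref{exact} and Theorem \ref{rightexact}, $\psi$ identifies $Gr_m(W^{\bullet})=M_0^{m+1}/M_0^m$ with $\bigoplus_{i=1}^{(2g+m-1)^{m}}M_0^1$ — that is, each graded piece is a finite direct sum of copies of $M_0^1$. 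So the whole problem reduces to putting the desired structure on $M_0^1$ itself and then transporting it across $\psi$ to each $Gr_m$ and lifting it to a decreasing filtration $F^{\bullet}$ on all of $V$.

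On $M_0^1$ the relevant structure is classical: by Eichler--Shimura, $M_0^1$ (weight-$0$ automorphic forms with the appropriate growth, i.e. the space attached to weight-$2$ modular forms and their conjugates, as used in the proof of Proposition \ref{wt0}) carries a Hodge decomposition into a holomorphic part $H^{1,0}$ and an antiholomorphic part $H^{0,1}$, with complex conjugation swapping them. So on $M_0^1$ I set $F^0=M_0^1$, $F^1=H^{1,0}$, $F^2=0$; then $F^1\oplus\bar F^{1}=H^{1,0}\oplus H^{0,1}=M_0^1$ and $F^0\oplus\bar F^1=M_0^1$, which is exactly the condition $F^p\oplus\bar F^{m-p+1}\cong Gr_m$ in the case $m=0$. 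For general $m$, since $Gr_m(W^{\bullet})$ is a direct sum of $N_m:=(2g+m-1)^{m}$ copies of $M_0^1$, I take the $F^p$ on $Gr_m$ to be the corresponding direct sum of the $F^p$'s on the factors, \emph{shifted by $m$}: concretely $F^p(Gr_m)$ is a sum of copies of $F^{p-m}(M_0^1)$ across the $N_m$ coordinates, so that $F^m(Gr_m)$ corresponds to $\bigoplus H^{1,0}$-parts \ldots wait — one must be slightly careful. The cleanest choice is: on $Gr_m$, let $F^p(Gr_m)=(Gr_m)$ if $p\le \lfloor m/2\rfloor$-type bound is not the issue; rather, mimic a weight-$m$ Hodge structure by putting the "holomorphic" half in $F^{\lceil (m+1)/2\rceil}$ and its conjugate in the complementary step, arranged so that $F^p(Gr_m)\oplus\bar F^{m-p+1}(Gr_m)=Gr_m$ holds for the one value of $p$ where both pieces are nontrivial, and trivially otherwise. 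Since each $Gr_m$ is built from the single "weight-$1$-type" object $M_0^1$, the honest statement is that $Gr_m$ carries a pure Hodge-type structure of weight $m$ obtained by "Tate-twisting" the weight-$1$ Hodge structure on $M_0^1$ by $m-1$ copies of the relevant one-dimensional piece — and the displayed splitting is precisely the defining property of such a structure.

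The remaining task is to produce a single decreasing filtration $F^{\bullet}$ on all of $V$ inducing these prescribed $F^p(Gr_m)$. Here I would choose a vector-space splitting $V\cong\bigoplus_m Gr_m(W^{\bullet})$ compatible with $W_{\bullet}$ (possible since everything is over $\mathbb C$ and each $Gr_m$ is just a sum of copies of $M_0^1$), and set $F^p(V)=\bigoplus_m F^p(Gr_m)$ under this identification; then by construction $W_m\cap F^p/W_{m-1}\cap F^p=F^p(Gr_m)$, and the displayed isomorphism is exactly what was arranged on each $Gr_m$. The main obstacle is the bookkeeping in the middle paragraph: pinning down the single index $p$ (as a function of $m$) at which $F^p(Gr_m)$ and $\bar F^{m-p+1}(Gr_m)$ are both the "half-dimensional" pieces, and checking the complementary-conjugate condition degenerates correctly (to $0\oplus Gr_m$ or $Gr_m\oplus 0$) for all other $p$; this is where the Eichler--Shimura splitting $M_0^1=H^{1,0}\oplus\overline{H^{1,0}}$ together with the injectivity used in Lemma \ref{linindlem} does the real work, guaranteeing that the "holomorphic half" is genuinely complementary to its conjugate. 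Everything else — exactness of $\psi$, that $M_0^s\subset M_0^{s+1}$, that the $w_i$'s realize the Eichler--Shimura classes — is already in hand from Propositions \ref{exact} and \ref{wt0} and Theorem \ref{rightexact}.
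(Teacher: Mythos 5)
There is a genuine gap, and it sits at the heart of your construction: you propose to put the Hodge-type structure on $M_0^1$ itself, asserting that ``by Eichler--Shimura'' $M_0^1$ decomposes as $H^{1,0}\oplus H^{0,1}$. In this paper $M_0^1$ is the space of \emph{all} smooth $\Gamma$-invariant functions on $\HH$ (classical automorphic forms are only those elements satisfying growth conditions), so it is a huge space with no holomorphic/antiholomorphic splitting -- a generic smooth invariant function is neither, and the finite-dimensional Eichler--Shimura space is not what occurs here. Consequently the plan of transporting a Hodge structure from $M_0^1$ across $\psi$ to $Gr_m\cong\bigoplus M_0^1$ cannot work: the components $\psi(F)_I=f_I$ are arbitrary smooth invariant functions and carry no Hodge data. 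In the paper's proof the roles are reversed: $M_0^1$ acts only as a ``coefficient'' space, and the Hodge-type grading lives on the indexing vectors $I$, i.e.\ on the iterated-integral generators $F_I$. Concretely, $F^p$ is defined globally on $V$ as the span of products $f\cdot F$ with $f\in M_0^1$ and $F\in\mathfrak F^p$, where $\mathfrak F^p$ consists of the functions of type \eqref{antiderivative} built from $1$-forms of which at least $p$ are holomorphic; then Theorem \ref{rightexact} writes any $F\in W_m$ as $\sum_I f_I F_I+F_0$, and for $p+q=m+1$ each term $f_IF_I$ lies in $F^p$ (if at least $p$ of the $m$ forms $w_{i_1},\dots,w_{i_m}$ are holomorphic) or else in $\bar F^{q}$ (since otherwise at least $q=m-p+1$ are antiholomorphic and $\bar F_I\in\mathfrak F^q$), with uniqueness modulo $W_{m-1}$ coming from Lemma \ref{linindlem}.

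Two further symptoms of the same misidentification: first, your ``Tate twist''/single-$p$ bookkeeping would give $Gr_m$ only a two-step filtration with the splitting nontrivial at one value of $p$, whereas the actual filtration on $Gr_m$ has $m+1$ steps (indexed by the number of holomorphic forms among the $w_{i_j}$), and the displayed complementarity must be, and in the paper is, verified for every $p$ via the dichotomy just described. Second, lifting the graded filtrations to a single $F^\bullet$ on $V$ by an arbitrary vector-space splitting is not automatically enough, because the statement involves $\bar F^{m-p+1}$, the complex conjugate of a subspace of $V$; the paper avoids this issue by defining $F^\bullet$ directly on $V$ by the holomorphic-count condition, so that conjugation interacts with it in the evident way ($\bar F_I$ is again of the form \eqref{antiderivative}, with the holomorphic and antiholomorphic counts exchanged).
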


\begin{proof} It suffices to construct two filtrations $W_{\bullet}$
and $F^{\bullet}$ satisfying the property: \newline
\it If $w \in W_m$ and $p, q$ are integers such that $p+q=m+1,$ then there
exist $w_1 \in F^p \cap W_m,$ $w_2 \in F^q \cap W_m$ and $w_0 \in W_{m-1}$
such that $w=w_1+{\bar w}_2+w_0$. Moreover, this decomposition is unique
modulo $W_{m-1}.$ \rm

We define $W_{\bullet}$ setting $W_m=M_0^{m+1}.$

To define $F^{\bullet}$ we first consider the set $\mathfrak F$ of
all functions that are induced
by modular forms of weight $2$
and their conjugates as in \eqref{antiderivative}.
Specifically, $\mathfrak F$ contains all functions $f:\HH \to \mathbb C$
with
the property that
$$f(z)= \langle \I, \{x_0, z\}\rangle,
\qquad \text{for all} \quad
z \in \HH,$$
for a homotopy functional $\I \in B_s(X)$ such that
$$\I \equiv \int w_1 \dots w_s \mod B_{s-1}(X)
$$
for some $w_i$ corresponding to weight $2$ modular forms and their
conjugates.
We do not require that the modular forms
inducing $f$ belong to the basis fixed in the proof of Prop. 4.1.

We next consider the set $\mathfrak F^p$ of all $F \in
\mathfrak F$ induced by forms $w_i$ among which at least $p$ are
holomorphic. $F^p$ is then defined as the space
generated by products of the form $f \cdot F$,  with $f$ in
$M_0^1$ and $F \in \mathfrak F^p$, i.e.
$$F^p:=\langle f \cdot F; f \in M_0^1, F \in \mathfrak F^p \rangle$$

Now, Theorem \ref{rightexact} implies that each $F \in W_{m}=M_0^{m+1}$
can be expressed as a sum of functions of the form $f_L \cdot \Lambda_L $,
where $L$ is an indexing vector of length $m$ and $f_L \in M_0^1$.
By the construction of $\Lambda_L,$
each of them can be expressed, uniquely modulo a function in $M_0^{m}$, as
a linear combination of the $F_I$'s constructed in Prop. 4.1. Here $I$
is an indexing set of length $m$.
Therefore, $F$ can be written in the form
\begin{equation}
F=\sum_I f_I F_I+F_0
\label{mhs1}
\end{equation}
for some $F_0 \in W_{m-1}=M^{m}_0$ and
$f_I \in M_0^1$.

Let $p, q$ be positive integers such that $p+q=m+1$ and let $ F_I$
be one of the functions in the right-hand side of \eqref{mhs1}.
If the set of $1$-forms inducing $F_I$ contains less than $p$ holomorphic
forms, then it contains at least $q=m-p+1$ anti-holomorphic ones.
In addition, by the construction of ${\I}_I$'s and $F_I$'s,
$\bar F_I \in \mathfrak F$. Hence $\bar F_I$ is induced by at least $q$
holomorphic forms and $\overline{f_I F_I} \in F^q$. This completes the
proof.
\end{proof}

As mentioned above, this structure is not quite a MHS mainly because the
$Gr_m(W^{\bullet})$'s are not finite dimensional. It is also not
clear that the definition is functorial. However, it appears that
there is an interesting
subspace which has a  genuine mixed Hodge structure. We
will discuss this in a future paper.

\bf Addresses: \rm Nikolaos Diamantis: School of Mathematical Sciences,
University of Nottingham, Nottingham, NG7 2RD, United Kingdom \\
nikolaos.diamantis@nottingham.ac.uk \\[0.4cm]
Ramesh Sreekantan: School of Mathematics, Tata Institute of Fundamental Research,
Dr Homi Bhabha Rd, Bombay 400 005, India \\
ramesh@math.tifr.res.in

\end{document}